\definecolor{myblue}{RGB}{25,25,112}
\newtheorem{thm}{Theorem}[section]
\newtheorem{lem}[thm]{Lemma}
\theoremstyle{definition}
\newtheorem{defn}[thm]{Definition}
\newtheorem{rmk}[thm]{Remark}
\DeclareMathOperator{\End}{End}
\DeclareMathOperator{\Aut}{Aut}
\newcommand{\Z}{\mathbb{Z}}
\newcommand{\C}{\mathbb{C}}
\newcommand{\Q}{\mathbb{Q}}
\newcommand{\B}{\mathbf{B}}
\newcommand{\E}{\mathbf{E}}
\begin{document}	
\title[Zeta Functions of Table Algebra Orders]{Computing zeta functions of table algebra orders using local zeta integrals }
\author[ ]{Angelica Babei${^*}$ and Allen Herman$^{\dagger}$}
\address{Department of Mathematics and Statistics, McMaster University, Hamilton, Ontario, L8S 4L8, Canada}\email{babeia@mcmaster.ca}
\address{Department of Mathematics and Statistics, University of Regina,
  Regina, Saskatchewan S4S 0A2, Canada}\email{Allen.Herman@uregina.ca}
\thanks{$^*$ The work of the first author was supported by a Simons Collaboration Grant (550029, to John Voight).} 
\thanks{$^{\dagger}$ The work of the second author is supported by an NSERC Discovery Grant.} 
\thanks{Data availability statement: This manuscript has no associated data.}
\keywords{Zeta functions, orders, zeta integrals, association schemes, integral adjacency algebras}
\subjclass{Primary: 11R54. Secondary: 11S45, 05E30.}
\begin{abstract}	
We investigate Solomon's zeta function for orders in the special case of orders generated by the standard basis of an integral table algebra, a special case of which is the integral adjacency algebra of an association scheme.  As Solomon's elementary method for computing this zeta function runs into computational difficulties for ranks $3$ or more, a more efficient method is  desired.  We give several examples to illustrate how the local zeta integral approach proposed by Bushnell and Reiner can be applied to compute explicit zeta functions for these orders.   
\end{abstract}

\maketitle

\section{Introduction}

\medskip
Let $\B$ be an integral basis of a finite-dimensional semisimple algebra $\Q \B$; that is, the structure constants relative to the basis $\B$ are integers.  In this case $\Z \B$ is a $\Z$-order in $\Q \B$.  Solomon's zeta function 
$$\zeta_{\Z \B}(s) = \sum_{n \ge 1} a_n n^{-s}$$ 
is the generating function for the sequence $a_n$ which counts the number of $\Z\B$-submodules of the regular module $\Z\B$  of a given index $n$ \cite{Solomon77}.  Among the many versions of global zeta functions for rings (see \cite{Rossmann2018} for an overview), Solomon's can be regarded as the ideal zeta function of the ring $\Z \B$, or, alternatively, the submodule zeta function for the regular module of $\Z \B$.  Since $\B$ is finite, in commutative cases $\Z\B$ will be isomorphic to the coordinate ring of an integral variety, and $\zeta_{\Z\B}(s)$ will be equal to the Hasse-Weil zeta function associated to this variety, thus an elementary case of an arithmetic zeta function.  Indeed, many of the interesting properties conjectured to hold for arithmetic zeta functions were previously shown for these zeta functions (see \cite{Bushnell-Reiner1980}). 

Convergence of the zeta function of a $\Z$-order for $Re(s)>1$ is a consequence of Hey's formula for the zeta function of a maximal order (see \cite[p. 307]{Solomon77}), for non-maximal orders this was shown by Jenner \cite{Jenner1963}.  In this situation Solomon proved the Euler product identity $\zeta_{\Z \B}(s) = \prod_p \zeta_{\Z_p \B}(s)$, where $p$ runs over the rational  primes, and showed that each factor $\zeta_{\Z_p\B}(s)$ is equal to $\zeta_{\Gamma_{p}}(s)$ times the value of a polynomial in $\Z[x]$ evaluated at $p^{-s}$, where $\Gamma_{p}$ is a maximal order of $\Q_p\B$ containing $\Z_p\B$.   For all but finitely many primes $p$, $\Z_p\B$ will be equal to $\Gamma_{p}$.  The exceptions are the primes $p$ that divide the discriminant of a global maximal order $\Gamma$ of $\Q \B$ or the least positive integer $f$ with $f \Gamma \subset \Z\B$ \cite{Solomon77}.  We will call these {\it relevant} primes.  

Solomon considered the case where $\B$ is the group basis of an integral group ring, and computed the zeta function for the integral group ring of a cyclic group of prime order $p$.  In this case $p$ is the only relevant prime, and the $p$-adic zeta function of $\Z_pC_p$ was obtained by using the Hermite normal form to count the ideals of $\Z_pC_p$.  This was extended to products of cyclic groups whose order is the product of two distinct primes by Hironaka (\cite{Hironaka81}, \cite{Hironaka85}), and to the elementary abelian $p$-group of order $p^2$ for a prime $p$ by Takegehara \cite{Takegahara87}.  Hanaki and Hirasaka calculated Solomon's zeta function for integral adjacency algebras of association schemes of prime order or rank $2$ in \cite{Hanaki-Hirasaka2016}, again using the Hermite normal form approach.   In analogy to Hironaka's work, Herman, Hirasaka, and Oh were able to extend this to the case of locally coprime tensor products \cite{Herman-Hirasaka-Oh2017}.  Hirasaka and Oh were able to use the Hermite normal form approach to calculate the zeta function of quotient polynomial rings $\Z[x]/p(x)\Z[x]$ where $p(x) \in \Z[x]$ has degree $3$ and factors completely over $\Z$ \cite{Hirasaka-Oh2018}.  Their work illustrates the complications arising in counting ideals using Hermite normal forms when $|\B|$ is $3$ or more.  

 Bushnell and Reiner developed a method for calculating the Solomon zeta function for a $\Lambda$-lattice, where $\Lambda$ is an arithmetic order in a semisimple algebra over a $p$-adic field that makes use of the zeta integrals introduced in Tate's thesis \cite{Bushnell-Reiner1980}.  Since then this approach has been used by Wittmann to compute submodule zeta functions for the $\Z C_p$-lattice $(\Z C_p)^n$ \cite{Wittmann2004} and by  Hofmann to compute the submodule zeta function for the $\Z$-lattice corresponding to a $\Z S_{n+1}$-module affording the irreducible character related to the hook partition $(2,1^{n-1})$ \cite{Hofman2016}.  The examples in this paper illustrate how to apply their technique to calculate, for any relevant prime $p$, the zeta function of $\Z_p \B$ where $\B$ is the $\Z$-basis consisting of the standard basis of a commutative integral table algebra, a situation which includes the cases where $\B$ is a finite abelian group or the set of adjacency matrices of a commutative association scheme as special cases.  We apply  this technique to calculate local zeta functions for the integral adjacency algebras of association schemes corresponding to complete graphs (Equation (\ref{completegraph})), the Petersen graph (Equation (\ref{Petersengraph})), the square (Theorem  \ref{Squaregraph}), and the generalized quadrangle of order $(2,1)$ (Theorem \ref{GenQuadranglegraph}).  The last two of these are rank $3$ cases that had not been done previously.  We  also give a formula for the zeta function of the adjacency algebra of crown graphs $K_{n,n}-I$ ($n$ odd) in dimension $4$ (Theorem \ref{crowngraph}). To compare our technique to the Hermite normal form approach, we will first review how to derive Equation  (\ref{completegraph}) with Hanaki and Hirasaka's Hermite normal form approach before re-calculating the same zeta function with with our method. 
 
 \section*{Acknowledgement}
 This version of the article has been accepted for publication, after peer
review, but is not the Version of Record and does not reflect post-acceptance
improvements, or any corrections. The Version of Record is available online at:
 \href{http://dx.doi.org/10.1007/s00009-023-02316-2}{\textbf{\color{myblue}dx.doi.org/10.1007/s00009-023-02316-2}}. Use of this Accepted Version is subject to the publisher’s Accepted
Manuscript terms of use \href{https://www.springernature.com/gp/open-research/policies/accepted-manuscript-terms}{\textbf{\color{myblue}www.springernature.com/gp/open-research/policies/accepted-manuscript-terms}}.

\section{The finite set of relevant primes}  

In all of our examples, $\B = \{b_0=1, b_1, \dots, b_{r-1} \}$ will be the standard basis of a commutative integral table algebra; in particular, 
\begin{itemize} 
\item the structure constants relative to $\B$ are nonnegative integers, 
\item the complex algebra $\C \B$ is a commutative semisimple algebra with skew-linear involution $*$, 
\item $\B$ is $*$-closed, 
\item for all $i \in \{0,1,\dots,r-1\}$ the coefficient of $1$ in the product $b_i b_i^*$ is the Perron-Frobenius eigenvalue $k_i$ of the regular matrix of $b_i$ in the basis $\B$, and 
\item if $j \in \{0,1,\dots,r-1\}$ and $b_j \ne b_i^*$, then the coefficient of $1$ in $b_ib_j$ is $0$.  
\end{itemize}
The Perron-Frobenius eigenvalue $k_i$ of the regular matrix of $b_i$ is called the {\it degree} of $b_i$ for all $i \in \{0,1,\dots,r-1\}$.  Since $b_0=1$ we have $k_0=1$.  The linear mapping of $\C\B$ given by $b_i \mapsto k_i$ is a one-dimensional algebra representation of $\C\B$ called the degree map.  (This notation is standard for table algebras, see \cite{Blau09} for background on table algebras and to see how integral adjacency algebras of association schemes and integral group rings fit into this framework.)  

The $\Z$-algebra $\Z \B$ is a $\Z$-order in the $r$-dimensional algebra $\Q \B$.  As a ring, $\Z \B$ is indecomposable; for standard integral table algebra bases, indecomposability of $\Z \B$ follows from the fact that the coefficient of $b_0$ in any nontrivial idempotent of $\Q\B$ is a rational number strictly between $0$ and $1$ \cite[Proposition 3]{Herman-Singh2018}.  When $\Z \B$ is commutative, $\Z \B$ will be properly contained in the maximal order $\Z \E$ of $\Q \B$ whose $\Z$-basis is the set of primitive idempotents $\E = \{e_i: i=0,\dots,r-1\}$ of $\Q \B$.  By convention, $e_0$ will be the primitive idempotent corresponding to the degree map. We will make use of the character formula for integral table algebras, which we now review from the sources \cite{AFM} and \cite{Blau09}.  If $\chi_i$ is the irreducible character corresponding to the primitive idempotent $e_i$, the values $\chi_i(b_j)$ are precisely the eigenvalues of the regular matrix of $b_j$ for $i \in \{0,1,\dots,r-1\}$, and the orthogonality of the idempotents $e_i$ imposes orthogonality relations that these character table entries must satisfy. The constant $n = \sum_{j=0}^{r-1} k_j$ is called the {\it order} of the table algebra.  The standard feasible trace of the algebra $\C\B$ is the linear map $\rho: \C\B \rightarrow \C$ given by $\rho(b_j) = n \delta_{j0}$ for $j \in \{0,1,\dots,r-1\}$, this is a positive linear combination of the irreducible characters $\chi_i$.  The {\it multiplicity} $m_i$ is the coefficient of $\chi_i$ in the expression $\rho = \sum_{i=0}^{r-1} m_i \chi_i$, the multiplicity $m_0$ of the degree map $\chi_0$ is always $1$.  The character formula for standard integral table algebras tells us for all $i \in \{0,1,\dots,r-1\}$
$$ e_i = \frac{m_i}{n} \sum_{j=0}^{r-1} \frac{\chi_i(b_j^*)}{k_j} b_j. $$
In the case where $\B$ is realized as the set of adjacency matrices of an association scheme, the multiplicities $m_i$ are positive integers.  

To determine our sets of relevant primes for a commutative integral table algebra $\Z\B$, we need two facts.  First, the maximal order $\Gamma = \Z \E$ has a $\Z$-basis of primitive idempotents, so its discriminant is $1$.  Second, it follows from the character formula that the least positive integer $f \coloneqq f(\Gamma:\Z\B)$ satisfying $f \Gamma \subseteq \Z \B$ will be a divisor of the {\it Frame number} 
$$\mathcal{F}(\B) = \dfrac{n^rk_1k_2\cdot \cdot \cdot k_{r-1}}{m_1\cdot \cdot \cdot m_r},$$
which is always a positive integer \cite[Lemma 3.7]{AFM}. Finally, we remark here that in all of the examples we consider in this paper, the commutative semisimple algebra $\Q \B$ will be split; i.e. $\Q \B \simeq \oplus_{i=0}^{r-1} \Q e_i$.  This is only used here when we identify $\Z_p \B$-lattices in $\Q \B$ up to isomorphism, it does not affect the set of relevant primes.

\section{ Comparing calculations in the case $|B|=2$ }\label{dim2}

When $\B = \{ b_0,b_1 \}$ is the standard basis of an integral table algebra of order $n$, then this algebra is realized up to exact isomorphism by the standard representation of the association scheme corresponding to the complete graph $K_n$ on $n$ vertices.  The standard matrices for this association scheme are $\{ I_n, J_n-I_n \}$, where $I_n$ is the $n \times n$ identity matrix and $J_n$ is the $n \times n$ matrix with all entries equal to $1$.  This implies $b_1^2 = (n-1)b_0 + (n-2)b_1$ and $b_1$ has minimal polynomial $\mu(x) = (x - (n-1))(x+1)$.  

We have $\Q\B = \Q e_0 + \Q e_1$, where $e_0 = \frac{1}{n}(b_0 + b_1)$ and $e_1 = 1 - e_0 = \frac{1}{n}((n-1)b_0-b_1)$.  Therefore, $\Gamma = \Z e_0 \oplus \Z e_1$ and the relevant primes are the divisors of $f(\Gamma:\Z\B)=n$. 

For a prime $p$, let $v_p$ denote the $p$-adic valuation on $\Q_p$; so for all $a \in \Q_p \setminus \{0\}$ we have that $a = p^{v_p(a)}a_0$ where $a_0 \in \Z_p \setminus p\Z_p$ is an element of valuation $v_p(a_0)=0$, and $v_p(0)=\infty$.  The function 
$$\zeta_{\Z_p}(s) = (1-p^{-s})^{-1} = \displaystyle{\sum_{i=0}^{\infty}} p^{-is}$$ 
of a complex variable $s$ will denote the (Dedekind) zeta function of the ring of $p$-adic integers. 

\smallskip
\quad (a) {\bf Using the Hermite normal form approach.} The zeta function of the integral adajacency algebra $\Z K_n$ corresponding to the complete graph association scheme $K_n$ was computed using this method by Hanaki and Hirasaka \cite{Hanaki-Hirasaka2016} and extended in \cite{Herman-Hirasaka-Oh2017} to the zeta function of $\mathcal{O}K_n$ where  $\mathcal{O}$ is the ring of integers in an algebraic number field.  

Assume $p$ is a prime divisor of $n$, and let $m=v_p(n)$.  For convenience we change the basis to $\{b_0,b_0+b_1\}$ as $(b_0+b_1)^2= n(b_0+b_1)$ is easier to work with.  The $\Z_p$-sublattices $I$ of $\Z_p\B$ with index $p^m$ are parametrized by their Hermite normal form relative to our ordered basis of $\Z_p \B$.   Suppose $I$ is a $\Z_p$-sublattice of $\Z_p\B$ of finite index.  We first choose $\gamma b_0 + \delta (b_0+b_1) \ne 0$ in $I$ with $r_1 = v_p(\gamma)$ minimal.  Adjusting by a unit we can assume this element is of the form $p^{r_1}b_0+\delta (b_0+b_1)$.  Then choose $r_2$ minimal such that $p^{r_2}(b_0+b_1) \in I-\{0\}$.  If $a$ is the remainder of $\delta$ divided by $p^{r_2}$ then $p^{r_1}b_0 + a (b_0+b_1) \in I$.  We claim that $I$ is generated by $p^{r_1}b_0 + a(b_0+b_1)$ and $p^{r_2}(b_0+b_1)$, and so the index of $I$ in $\Z_p\B$ is $p^{r_1+r_2}$.   If $\gamma b_0 + \delta b_1 \in I$, then $v_p(\gamma) \ge r_1$, so subtracting a multiple of $p^{r_1}b_0 + a(b_0+b_1)$ leaves $\epsilon (b_0+b_1)$, for some $\epsilon \in \Z_p$, and by the choice of $r_2$, this $\epsilon$ is a multiple of $p^{r_2}$, proving our claim.  

To be an ideal of $\Z_p\B$, $I = \Z_p[(p^{r_1}b_0 + a(b_0+b_1)] + \Z_p[p^{r_2}(b_0+b_1)]$ must be closed under multiplication by $(b_0+b_1)$, which leads to the condition $p^{r_2}$ divides $p^{r_1} + an$.  For a given ideal $I$ of index $p^{r_1+r_2}$ we need to count the number of choices of $a$ mod $p^{r_2}$ with this property.  

If $r_1 \ge r_2$ and $r_2 < v_p(n)=m$, then  for every choice of $a$ mod $p^{r_2}$, we have $p^{r_1}+an \in p^{r_2}\Z_p$, so we get $p^{r_2}$ solutions.  If $r_1 < r_2 < m$, then $p^{r_1}+an$ will not be a multiple of $p^{r_2}$, so there are no solutions.  So now we can assume $r_2 \ge m$.  If also $r_1 \ge m$, then $p^{r_1}+an = \alpha p^{r_2}$ implies $a = n_0^{-1}(-p^{r_1 - m} + \alpha p^{r_2 - m})$.  So we get a unique solution for $a$ mod $p^{r_2-m}$ and hence $p^{m}$ solutions for $a$ mod $p^{r_2}$.  On the other hand, if $r_1 < m \le r_2$, then $a = n_0^{-1}(-p^{r_1-m} + \alpha p^{r_2-m})$ is not in $\Z_p$ for any choice of $\alpha \in \Z_p$, so there are no solutions. 

Therefore, 
$$\begin{array}{rcl} 
\zeta_{\Z_p\B}(s) &=& \displaystyle{\sum_{r_2=0}^{m-1}} \displaystyle{\sum_{r_1=r_2}^{\infty}} p^{r_2} p^{(r_1+r_2)(-s)} + \displaystyle{\sum_{r_2=m}^{\infty}} \displaystyle{\sum_{r_1=m}^{\infty}} p^{m} p^{(r_1+r_2)(-s)} \\
&=& \displaystyle{\sum_{r_2=0}^{m-1}} p^{r_2(1-s)} \displaystyle{\sum_{r_1=r_2}^{\infty}} p^{r_1(-s)} + p^{m} \displaystyle{\sum_{r_2=m}^{\infty}} p^{r_2(-s)} \displaystyle{\sum_{r_1=m}^{\infty}} p^{r_1(-s)} \\
&=& \displaystyle{\sum_{r_2=0}^{m-1}} p^{r_2(1-2s)} (1-p^{-s})^{-1} + p^{m(1-2s)} (1-p^{-s})^{-2} \\
&=& \bigg[ \bigg( \displaystyle{\sum_{r_2=0}^{m-1}} p^{r_2(1-2s)}(1-p^{-s}) \bigg) +p^{m(1-2s)} \bigg] \zeta_{\Z_p}(s)^2.
\end{array}$$

So the zeta function of $\Z K_n$ is 
\begin{equation}\label{completegraph}
\zeta_{\Z K_n}(s) = \bigg[ \prod_{p | n} \bigg( \displaystyle{ (1-p^{-s})\sum_{r=0}^{v_p(n)-1}} p^{r(1-2s)}  +p^{v_p(n)(1-2s)} \bigg) \bigg] \zeta_{\Z}(s)^2. 
\end{equation}
 
\begin{rmk} {\rm The polynomial factor in the above zeta  function is in agreement with the observation Bushnell and Reiner established in the case where the order $\Z_p\B$ is the group ring of a finite group over $\Z_p$: if $m=v_p(f)$, then there exists $g(x) \in \Z [x]$ with constant term $1$ and leading term $p^m x^{2m}$ such that $\zeta_{\Z_p \B}(s) = g(p^{-s}) \zeta_{\Z_p}(s)^r$ \cite[pg. 158]{Bushnell-Reiner1980}.  Their observation is a consequence of a strong form of the local functional equation they established for the zeta function of orders that are stabilized by a suitable trace form.  As we will see, this property need not hold for integral table algebras.  Any standard integral table algebra basis $\B$ that is not a group will not be self-dual with respect to the normalized trace form because the dual to $b_i$ is $\frac{1}{k_i}b_i$. }
\end{rmk}

\smallskip
\quad (b) {\bf Using Bushnell and Reiner's formula for local zeta integrals.} We summarize the method in \cite{Bushnell-Reiner1980}. Let $A$ be a finite dimensional semisimple $\Q$-algebra,  $\Lambda$ a $\Z$-order in $A$, and $V$ a finitely-generated left $A$-module. Fix some full $\Lambda$-lattice $L$ in $V$, and define the Solomon zeta function \[ \zeta_\Lambda(L; s)=  \sum_{N \subseteq L} (L:N)^{-s},\] where the sum is over all full $\Lambda$-lattices $N$ in $L$. When $L=\Lambda$, we get $\zeta_\Lambda(\Lambda; s) = \zeta_\Lambda(s)$, and we drop the $\Lambda$ in the parentheses. Given another $\Lambda$-lattice $M$ in $V$, the  {\it genus} of $M$ consists of all full $\Lambda$-lattices $N$ in $V$ such that there exists a $\Lambda_p$-isomorphism $N_p\cong M_p$  at each finite prime $p$. We write $N \sim M$ if $N$ is in the genus of $M$. Define  the genus zeta function 
 \[ Z_\Lambda(L, M; s) = \sum_{\substack{N \subseteq L\\ N \sim M}} (L : N)^{-s}, \] 
where the sum is over all full $\Lambda$-lattices $N \subseteq L$ such that $N$ lies in the genus of $M$.

By the Jordan-Zassenhaus theorem, there are only finitely many genera of full $\Lambda$-lattices in $V$, so we can pick a finite set $\mathscr{G}$ of genus representatives. Then 
\[ \zeta_\Lambda(L; s)= \sum_{M \in \mathscr{G}} Z_\Lambda(L, M ; s).\]

Now we turn to the local case, where $A$ is a $\Q_p$-algebra, $\Lambda$ is a $\Z_p$-order in $A$, and $V$, $L$ and $M$ are the local analogues of the setting above.  Since now only one prime is involved,
 \[ Z_\Lambda(L, M; s)=\sum_{\substack{N \subseteq L\\ N \cong M}} (L : N)^{-s}, \] 
and the sum is over all full sublattices $N$ of $L$ such that $N \cong M$ as $\Lambda$-lattices.  Therefore we can express $\zeta_\Lambda(L; s)=\sum_M Z_\Lambda(L, M; s)$, where the sum is over all isomorphism classes of full $\Lambda$-lattices in $V$. Moreover, Equation (11) in \cite{Bushnell-Reiner1980} gives

\begin{equation}
\label{genuszetaeqn}
 Z_\Lambda(L, M; s)=\mu(\Aut M)^{-1} (L:M)^{-s} \int_{B^\times}  \Phi_{\{M:L\}}(x) \left\Vert x \right\Vert_V^s d^\times x,
\end{equation}
 where $B=\End_A(V)$, $B^\times=\Aut_A(V)$, $\displaystyle (L:M)=\frac{(L:(L \cap N))}{(M:(L \cap M))}$,  $ \Phi_{\{M:L\}}$ is the characteristic function in $B$ of the lattice 
\[\{M:L\}=\{ x \in B \, | \, Mx \subseteq L\},\]
the norm $\left\Vert x \right\Vert_V=(Nx:N)$ where $N$ is any full $\Z_p$-lattice in $V$, and $d^\times x$ is a multiplicative Haar measure on $B^\times$ that has been normalized so that $\int_{\Lambda_{0}^\times}d^\times x=1$ when $\Lambda_0$ is a maximal order of $B$.   We note that $\mu(\Aut M)=\mu(\{ M: M\}^\times)$, and when $M$ is a $\Z_p$-order,   $\mu(\Aut M)=\mu (M^\times)$.

In our (local) calculations for $\zeta_\Lambda(s)$, we obtain representatives of the isomorphism classes of $\Lambda$-lattices $M$ for the genus zeta functions $Z_\Lambda(\Lambda, M; s)$ by considering $\Lambda$-lattices lying between $\Lambda$ and the  maximal order in $A$.

\begin{lem} 
\label{LatticesAbove}
Suppose $A$ is a split semisimple commutative algebra over  a local field $K$ with finite residue field. Let $R$ be the ring of integers of $K$, and let $\pi R$ be its maximal ideal.  Let $\Lambda_0$ be the maximal $R$-order in $A$, and let $\Lambda$ be any $R$-order in $A$.  Then every full $\Lambda$-lattice $N$ in $A$ is isomorphic as a $\Lambda$-lattice to  a full $\Lambda$-lattice $M$ satisfying  $\Lambda \subseteq M \subseteq \Lambda_0$.  In addition, given any $\Lambda$-isomorphism $\phi: M \rightarrow M'$, where $M$ is an $R$-order and $M, M'$ are full $\Lambda$-lattices with $\Lambda \subseteq M \subseteq \Lambda_0$ and $ \Lambda \subseteq  M' \subseteq \Lambda_0$,  implies $M=M'$.
\end{lem}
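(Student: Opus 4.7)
The plan rests on the splitting $A = \bigoplus_{i=0}^{r-1} K e_i$ with primitive idempotents $e_i$, so that $\Lambda_0 = \bigoplus_i R e_i$.  Since $A$ is commutative, any $\Lambda$-module isomorphism $L \to L'$ between full $\Lambda$-lattices in $A$ extends, after tensoring with $K$, to a $K\Lambda = A$-linear automorphism of $A$; hence it is multiplication by some $u \in A^{\times}$.  Both statements then reduce to producing such a $u$ with the correct properties.

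For the first claim I would normalize $N$ in two steps.  First, since $\Lambda_0 N$ is a full $\Lambda_0$-lattice it has the form $\bigoplus_i \pi^{c_i} R e_i$ for integers $c_i$, and setting $u_1 = \sum_i \pi^{-c_i} e_i \in A^\times$ gives $\Lambda_0 (u_1 N) = \Lambda_0$; in particular $N_1 := u_1 N$ lies in $\Lambda_0$ and $\pi_i(N_1) = R$ for every $i$.  Second, I would locate an element $v \in N_1$ whose image in each factor $R e_i$ is a unit of $R$, i.e., $v \in N_1 \cap \Lambda_0^{\times}$.  Given such a $v$, set $M := v^{-1} N_1$: then $M \subseteq v^{-1} \Lambda_0 = \Lambda_0$, and $1 = v^{-1} v \in M$, so $\Lambda = \Lambda \cdot 1 \subseteq M$ because $M$ is a $\Lambda$-module.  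The composite $v^{-1} u_1$ is the required $\Lambda$-isomorphism $N \to M$.

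For the second claim, I write $\phi$ as multiplication by $u \in A^\times$, so $M' = uM$.  Since $M$ is an $R$-order, $1 \in M$, and hence $u = \phi(1) \in M' \subseteq \Lambda_0$; since $\Lambda \subseteq M'$ gives $1 \in M'$, we also have $u^{-1} = \phi^{-1}(1) \in M \subseteq \Lambda_0$.  Hence $u \in \Lambda_0^{\times}$, so multiplication by $u$ has modulus $1$ on $A$ and $M$ and $uM = M'$ share the same Haar covolume.  Because $M$ is a ring containing $u^{-1}$, one has $u^{-1} M \subseteq M \cdot M = M$, giving $M \subseteq uM = M'$.  Combined with the equality of covolumes, this forces $M = M'$.

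The main obstacle is the second step of Claim 1, namely producing $v \in N_1 \cap \Lambda_0^{\times}$.  The individual surjections $\pi_i(N_1) = R$ only yield, one factor at a time, elements of $N_1$ that are units at a single factor; assembling a single element that is a unit in every factor simultaneously requires a residue-field argument applied to the image of $N_1$ in $\Lambda_0/\pi \Lambda_0 \cong \mathbb{F}_q^r$, where both the $\Lambda$-stability of $N_1$ and the fact that $1 \in \Lambda$ must be brought to bear.
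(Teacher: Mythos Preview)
Your approach to the first claim mirrors the paper's: normalize $N$ so that each coordinate projection surjects onto $R$, then extract a unit $v \in N_1 \cap \Lambda_0^\times$ and set $M = v^{-1} N_1$. You are right to flag the extraction of $v$ as the crux, but this step cannot be completed in general, and in fact the first assertion of the lemma is false as stated. Take $K = \Q_2$, $r = 3$, $\Lambda = \Z_2 \cdot 1 + 2\Lambda_0$, and $N' = \{(a,b,c) \in \Z_2^3 : a + b + c \equiv 0 \pmod 2\}$. Then $N'$ is a full $\Lambda$-lattice with $N' e_i = \Z_2 e_i$ for each $i$, yet $N' \cap \Lambda_0^\times = \varnothing$, since three odd integers never have even sum. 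Your proposed residue-field fix cannot succeed here: the image of $N'$ in $\Lambda_0/2\Lambda_0 \cong \mathbb{F}_2^{\,3}$ is the hyperplane $\{a+b+c=0\}$, which is $\Lambda$-stable (the image of $\Lambda$ being $\mathbb{F}_2 \cdot (1,1,1)$) but misses $(\mathbb{F}_2^\times)^3$ entirely. Moreover $N'$ is not $\Lambda$-isomorphic to any lattice between $\Lambda$ and $\Lambda_0$. There are exactly five such lattices: $\Lambda_0$, $\Lambda$, and the three index-$2$ overorders $M_k$ defined by a congruence between two of the coordinates. One computes $\End_\Lambda(N') = \Lambda$, whereas $\End_\Lambda(M_k) = M_k \ne \Lambda$ and $\End_\Lambda(\Lambda_0) = \Lambda_0$, ruling those out; and the isomorphism invariant $[\Lambda_0 L : L]$ equals $2$ for $L=N'$ but $4$ for $L=\Lambda$, ruling out $N' \cong \Lambda$. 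The paper's own proof has precisely the same gap, at the sentence ``we can take linear combinations of elements of $N'$ to obtain an element $u \in N'$ with the property that $u e_i \notin \pi R e_i$ for all $e_i$'': this is asserted without justification and fails for the $N'$ above.

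Your argument for the second claim is correct and slightly cleaner than the paper's. The paper deduces $u \in M^\times$ from the fact that $u \in R[u^{-1}]$ whenever $u^{-1}$ is integral over $R$ (citing Kleinert); you instead use ring closure of $M$ to get $u^{-1}M \subseteq M$, hence $M \subseteq uM = M'$, and finish with the equality of covolumes forced by $u \in \Lambda_0^\times$. Both routes are valid.
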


\begin{proof}
By multiplying by an appropriate power of $\pi$, we can assume $N \subseteq \Lambda_0$ and $\pi^{-1} N \not\subseteq \Lambda_0$.  We claim that such a lattice is isomorphic to one that contains a unit of $\Lambda_0$.  Let $\mathcal{E} = \{e_1,\dots,e_r\}$ be set of the primitive idempotents of $A$; with our assumptions this set is an $R$-basis for $\Lambda_0$.  Suppose that there exists an $e_i \in \mathcal{E}$ such that for all $n \in N$, $ne_i \in \pi R e_i$.  Let $\phi_i : N \rightarrow \Lambda_0$ be given by $\phi_i(n) = n(1-e_i) + \pi^{-1}ne_i$, for all $n \in N$.  Since $\phi_i$ is an injective $\Lambda_0$-linear map, it is a $\Lambda$-lattice isomorphism.  After applying as many of these maps as we can, we will obtain a $\Lambda$-lattice $N' \subseteq \Lambda_0$ that is isomorphic to $N$ with the property that for all $e_i \in \mathcal{E}$, $N'e_i \not\subseteq \pi R e_i$. In particular, we have $\pi R e_i \subsetneq N'e_i \subseteq Re_i$, and for such an $R$-lattice, we can take linear combinations of elements of $N'$ to obtain an element $u \in N'$ with the property that $ue_i \not\in \pi R e_i$ for all $e_i \in \mathcal{E}$, and such an element is a unit of $\Lambda_0$.  
It follows that $M = N'u^{-1}$ will be a $\Lambda$-lattice isomorphic to $N$ and contains $1$.  Since $1 \in M$ and $N' \subseteq \Lambda_0$, we have $\Lambda =\Lambda \cdot 1 \subseteq M \subseteq \Lambda_0$, and $M$ is a $\Lambda$-lattice with the desired properties. 

To prove the last claim, suppose we have an $\Lambda$-lattice isomorphism $\phi: M \rightarrow M'$, such that $\Lambda \subseteq M\subseteq \Lambda_0 $, $\Lambda \subseteq  M'\subseteq \Lambda_0$, and $M$ is an $R$-order. Then $\phi$ extends to an $A$-linear map $KM \cong KM'$, so it is given by multiplication by a unit $u \in A^\times$. In particular, $u=\phi(1) \in \Lambda_0$. In addition, since $1 \in M'$ and $\phi(u^{-1})=1$, we have $u^{-1} \in M$.  Since $M$ is an order,  $R[u^{-1}] \subseteq M$, and since $u \in R[u^{-1}]$ by \cite[(2) on pg. 208]{Kleinert1994}, we get  $u \in M^\times$. Therefore, $M'=M u = M$.
\end{proof}

In our specific examples, $K=\Q_p$ and $R=\Z_p$. Then each $\Z_p$-lattice $M$ has a basis in Hermite  normal form whose coefficients are rational \cite[Theorem 3.1.7]{Caruso}. By  exhausting all possible Hermite normal forms of $\Lambda$-lattices lying between $\Lambda$ and $\Lambda_0$, Lemma \ref{LatticesAbove} gives representatives for all the isomorphism classes of $\Lambda$-lattices. When these lattices are also $\Z_p$-orders, the lemma guarantees that they are nonisomorphic. To handle the case when we obtain some $\Lambda$-lattices which are not  $\Z_p$-orders, we examine their Hermite normal forms in more detail as follows. 

\begin{defn}
Let $M$ be an $\Z_p$-lattice in $A$ with basis $\{v_i\}_{i=0}^{r-1}$ in Hermite normal form with respect to the basis $\{e_i\}_{i=0}^{r-1}$; in particular, $v_i=\sum_{j=i}^{r-1} v_{ij}  e_j$ with $v_{ij} \in   \Q_p$. We define the \textit{diagonal} of the lattice $M$ to be the tuple $(v_{00}, v_{11}, \dots, v_{r-1, r-1})$, defined up to multiplication by elements of $\Z_p^\times$.
\end{defn}

\begin{rmk}
\label{HermiteZeroes} 
Suppose we have two isomorphic $\Lambda$-lattices $M, M'$ with $\Lambda \subseteq M, M' \subseteq \Lambda_0$. As in the proof of the last claim in Lemma \ref{LatticesAbove}, $\phi: M \rightarrow M'$ is given by multiplication by $u \in \Lambda_0$, where also $u^{-1} \in M \subseteq \Lambda_0$. Therefore, $u \in \Lambda_0^\times$, and since $\Lambda_0^\times= \oplus_{i=0}^{r-1} \Z_p^\times e_i$, we can write $u=\sum_{i=0}^{r-1} u_i e_i$ with $u_i \in \Z_p^\times$.

Then one can easily check the following:
\begin{enumerate}
    \item If $B=\{v_0, v_1, \dots, v_{r-1}\}$ is an upper triangular basis of $M$ (i.e.  $v_i=\sum_{j=i}^{r-1} v_{ij} e_j$ ), the image $\phi(B)=\{\phi(v_0), \phi(v_1), \dots, \phi(v_d)\}$ is also upper triangular.
    \item If $B$ is is upper triangular and some vector $v_k =\sum_{j=k}^{r-1} v_{kj} e_j \in B$ has one or more consecutive zero coefficients $v_{k,k+1}=\dots=v_{k,k+\ell}=0$ for  $1 \le \ell \le r-1-k$, then the basis $\{w_i\}$ of $\phi(M)$ with $w_i=\sum_{j=i}^{r-1} w_{ij} e_j$ in Hermite form also has  $w_{k,k+1}=\dots= w_{k,k+\ell}=0$. This follows since Hermite reduction of $\phi(B)$ will only affect the zero entries to the right of $w_{k,k+\ell}$. 
    \item The diagonals of $M$ and $\phi(M)$ are equal. If $M$ has diagonal $(v_{00}, v_{11}, \dots, v_{r-1, r-1})$, then $\phi(M)$ has diagonal $(u_0v_{00}, u_1v_{11}, \dots, u_{r-1} v_{r-1, r-1})$, where $u_j \in \Z_p^\times$. Therefore, up to multiplication by elements in $\Z_p^\times$, the two diagonals are the same.
\end{enumerate}
\end{rmk}

 \textbf{Recalculation of the complete graph zeta function using local zeta integrals.} We will first introduce some notation.  Consider a semisimple commutative $\Q_p$-algebra $A$ of arbitrary rank $r$. First, we can denote $\Z_p$-lattices in $A$ by their bases. For example, given a  basis $\{v_0, v_1, \dots,v_{r-1}\}$ of a $\Z_p$-lattice $L$, we write  $L=\langle v_0, v_1, \dots, v_{r-1}\rangle$.

Furthermore, once we fix the $\Z_p$-basis of the maximal order $\Lambda_0=\langle v_0, v_1, \dots, v_{r-1}\rangle$, we will denote the $\Z_p$-lattice $\langle p^{n(0)}v_0, p^{n(1)}v_1, \dots, p^{n(r-1)}v_{r-1} \rangle$ by $\Lambda_{n(0), \dots,  n(r-1)}$. In particular, the maximal order is denoted by $\Lambda_{0,0,\dots, 0}$. Also  note that $p^k \Lambda_{n(0), \dots, n(r-1)}=\Lambda_{n(0)+k,  \dots, n(r-1)+k}$ for all $k \in \Z$. 

When $\B$ is the standard basis of an integral table algebra of rank $2$, then we have $\displaystyle A=\Q_p\B \cong \Q_pe_0 \oplus \Q_pe_1$, our order is $\Lambda=\Z_p\B$,  and we fix the basis for the maximal order  $\Lambda_{00} = \langle e_0+e_1, e_1\rangle$.  Let $m =  v_p(n)$. Since $\displaystyle A \cong \frac{\Q_p[x]}{(x-(n-1))} \oplus \frac{\Q_p[x]}{(x+1)}$, we can represent $\Lambda$ as 
\[ \Lambda =\{ (a, b) \in \Z_p^2: a \equiv b \pmod{p^m}\}.\]

 Therefore, $\Lambda=\Lambda_{0, m}=\langle e_0+e_1, p^m e_1\rangle$, where the basis is in Hermite normal form.  The lattices $M$ with $\Lambda \subseteq M \subseteq \Lambda_{00}$ have bases in Hermite normal form $\{ e_0+e_1, p^j e_1\}$ for $0 \le j \le m$. Therefore, by Lemma \ref{LatticesAbove}, the tower of overorders of $\Lambda$   
 \[\Lambda_{00} \supsetneq \Lambda_{01} \supsetneq \dots \supsetneq \Lambda_{0m}=\Lambda\] gives a complete set of representatives of non-isomorphic $\Lambda$-modules in $V=A$. This gives 
\[ \zeta_{\Z_p\B}(s) =  \zeta_\Lambda(\Lambda; s)=  \sum_{0 \le i \le m} Z_{\Lambda}(\Lambda, \Lambda_{0i} ; s).\]

 Moreover, since $V=A$ and $A = \End_A(V)$,  Equation (\ref{genuszetaeqn}) in this case gives
\[ Z_\Lambda(\Lambda, \Lambda_{0i}; s)=\mu(\Lambda_{0i}^\times)^{-1} (\Lambda: \Lambda_{0i})^{-s} \int_{A^\times}  \Phi_{\{\Lambda_{0i}:\Lambda\}}(x) \left\Vert x \right\Vert_V^s d^\times x,\] where  $d^\times x$ is the multiplicative Haar measure on $A^\times$ such that $\int_{\Lambda_{00}}d^\times x=1$.

For each $0 \le i \le m$,  the index $(\Lambda:\Lambda_{0i})$ is equal to $p^{i-m}$. The measure $\mu(\Lambda_{0i}^\times)$ is given by  $\mu(\Lambda_{0i}^\times)=(\Lambda_{00}^\times:\Lambda_{0i}^\times)^{-1}$. In particular, $\mu(\Lambda_{00}^\times)=1$ and $\mu(\Lambda_{0i}^\times)=(p^i-p^{i-1})^{-1}$  when $i \ge 1$. 

Using standard linear algebraic techniques we obtain 
$$\{\Lambda_{0i}:\Lambda\} =\langle p^{m-i}(e_0+e_1), p^me_1\rangle =\Lambda_{m-i, m}=p^{m-i}\Lambda_{0i}.$$

We would like to decompose each $\Lambda_{m-i, m}$ as a disjoint union of sets over which we can compute the appropriate $p$-adic integrals. Let $p^{m-i}\alpha \cdot 1 +p^m\beta e_1 \in\Lambda_{m-i, m}$ for some $\alpha, \beta \in \Z_p$. Then either $\alpha=p\alpha_1$, or $\alpha=p\alpha_1+u$ for some $u \in  \{1, \dots, p-1\}$, and $\alpha_1\in \Z_p$. In the first case, $p^{m-i+1}\alpha_1\cdot 1 +p^m\beta e_1 \in \Lambda_{m-(i-1), m}=\{\Lambda_{0,i-1}: \Lambda\}$. In the second case, $p^{m-i+1} \alpha_1\cdot 1 + p^{m} \beta e_1 + p^{m-i} u \in p^{m-i} \cdot u (p \Lambda_{0,i-1}+1)$. Note that $p\Lambda_{0,i-1}+1$ is a subgroup of the unit group $\Lambda_{0,i-1}^\times$, and each $ u (p \Lambda_{0,i-1}+1)$ is a multiplicative translate. Therefore, 
\begin{align*}
 \int_{\Lambda_{m-i, m} \cap A^\times}  \left\Vert x \right\Vert_V^s d^\times x & =  &  \int_{\Lambda_{m-(i-1),m} \cap A^\times}  \left\Vert x \right\Vert_V^s d^\times x + (p-1)  \int_{p^{m-i} (p\Lambda_{0,i-1}+1)}  \left\Vert x \right\Vert_V^s d^\times x \\
 & = &   \int_{\Lambda_{ m-(i-1),m} \cap A^\times}  \left\Vert x \right\Vert_V^s d^\times x + (p-1) p^{-2(m-i)s} \mu(p\Lambda_{0,i-1}+1).
\end{align*}
We thus obtain a recursive formula for each integral, and the base step is computing $\int_{\Lambda_{m, m} \cap A^\times}  \left\Vert x \right\Vert_V^s d^\times x$, where as $\Z_p$-lattices, $\Lambda_{m,m} \cong p^m\Z_p \oplus p^m \Z_p$. Since $p^m\Z_p-\{0\}=\bigcup_{k=m}^\infty p^k\Z_p^\times$, we find that 
\begin{align*}
\displaystyle \int_{A^\times \cap(p^m\Z_p\oplus p^m\Z_p)} \left\Vert x \right\Vert_V^s d^\times x &= \left(\int_{p^m\Z_p-\{0\}} (\Z_pa:\Z_p)^s d^\times a\right)^2=\left(\sum_{k=m}^\infty  \int_{p^k\Z_p^\times} p^{-ks} d^\times a\right)^2 \\
&= p^{-2ms} (1-p^{-s})^{-2},
\end{align*}
and therefore
\begin{align*}
 \int_{\Lambda_{m-i,m} \cap A^\times}  \left\Vert x \right\Vert_V^s d^\times x & =  &  p^{-2ms} (1-p^{-s})^{-2} + (p-1) \sum_{j=1}^i  p^{-2(m-j)s} \mu(p\Lambda_{0, j-1}+1) .
\end{align*}

In order to find $\mu(p\Lambda_{0,j-1}+1)$, note that each element in $(p\Lambda_{0,j-1}+1)$ is of the form $(1+pb, 1+pb+p^j c)$ for $b, c \in \Z_p$. Therefore, there are $p^{j+1}$ possible such elements modulo $p^{j+1}$. On the other hand, any element in $\Lambda_{00}^\times$ is of the form $(u,v)$ with $u, v \in \Z_p^\times$, and therefore there are $(p^j(p-1))^2$ possible such elements modulo $p^{j+1}$.  Therefore, $\mu(p\Lambda_{0,j-1}+1)=p^{1-j}(p-1)^{-2}$, and 
\[ \displaystyle \int_{\Lambda_{ m-i,m} \cap A^\times}  \left\Vert x \right\Vert_V^s d^\times x =   p^{-2ms} (1-p^{-s})^{-2} + (p-1) \sum_{j=1}^i  \dfrac{p^{-2(m-j)s} p^{1-j}}{(p-1)^2} \]
\begin{equation}\label{Lambda_mm-i}
  =  p^{-2ms} (1-p^{-s})^{-2} + p^{-2ms} \sum_{j=1}^i  \dfrac{p^{2js-j+1}}{p-1}. 
\end{equation}

Therefore, $Z_\Lambda(\Lambda, \Lambda_{00} ; s)=p^{-ms}(1-p^{-s})^{-2}$, and for each $1 \le i \le m$, 
$$\begin{array}{rcl} 
Z_\Lambda(\Lambda, \Lambda_{0i} ; s)& = & \displaystyle   p^{i-1}(p-1) p^{(m-i)s}\left(p^{-2ms} (1-p^{-s})^{-2} + p^{-2ms} \sum_{j=1}^i  \frac{p^{2js-j+1}}{p-1}\right) \\
 & = &p ^{-ms-is+i-1}(p-1)\zeta_{\Z_p}(s)^2+p^{-ms-is+i}\sum_{j=1}^{i} p^{2js-j} \\
 & = &p ^{-ms-is+i-1}(p-1)\zeta_{\Z_p}(s)^2+p^{-ms+is}\sum_{j=1}^{i} p^{(i-j)-2(i-j)s}.
\end{array}$$

When we sum the formulas for our genus zeta functions to get $\zeta_{\Z_p\B}(s) = \sum_{i=0}^m Z_{\Lambda}(\Lambda,\Lambda_{0i};s)$, we get a formula that can be simplified to the one  in Equation (\ref{completegraph}).

\medskip
We end this section with an example of rank $4$ to illustrate how, on occasion, the indecomposable order $\Z\B$ can be locally decomposable at the relevant primes, which makes calculating its zeta function much easier. 

\medskip
(c) {\bf The crown graphs $K_{n,n}-I$, $n$ an odd integer.}  If $n \ge 4$, $K_{n,n}-I$ is the complete multipartite graph $K_{n,n}$ with a perfect matching removed.  It is a distance regular graph with intersection array $[n-1,n-2,1;1,n-2,n-1]$, which tells us that in the regular representation of the adjacancy algebra of the association scheme, $b_1$ is represented as the tridiagonal matrix 
$$b_1 = \begin{bmatrix} 0 & n-1 & 0 & 0\\ 1 & 0 & n-2 & 0 \\ 0 & n-2 & 0 & 1 \\ 0 & 0 & n-1 & 0 \end{bmatrix}.$$ 
(For readers unfamiliar with this concept, this intersection array convention applies to all distance regular graphs.  The numbers in the array give the lists of super- and sub-diagonal entries of $b_1$, the diagonal is filled in with the criteria that the row sum has to be $k_1$.  The remaining basis elements are determined inductively from this matrix, since we see that $b_i$ will be a polynomial of degree $i$ in $b_1$, for $i =2, \dots, r-1$.  It follows that the character table of the association scheme is entirely determined by the eigenvalues of $b_1$.  For details on how association schemes arise from the intersection arrays of distance regular graphs, we refer the readers to  \cite{Bannai-Ito84}.  Here we are using the database \cite{DRdotorg} to connect distance-regular graphs to their intersection arrays.)  

From the character table of this association scheme we find the primitive idempotents of $\Q\B$ are $e_0=\frac{1}{2n}(b_0+b_1+b_2+b_3)$, $e_1=\frac{n-1}{2n}b_0 - \frac{1}{2n}b_1 -\frac{1}{2n}b_2 + \frac{n-1}{2n}b_3$, $e_2 =\frac{1}{2n}(b_0-b_1+b_2-b_3),$ and $e_3 = \frac{n-1}{2n}b_0 + \frac{1}{2n}b_1 - \frac{1}{2n}b_2 - \frac{n-1}{2n} b_3.$  From this we can write the elements of $\B$ in terms of the primitive idempotents: 
\begin{align*} 
b_0 &= e_0+e_1+e_2+e_3{\color{blue} ,} \\
b_1 &=(n-1)e_0-(n-1)e_1-e_2+e_3{\color{blue} ,} \\
b_2 &=(n-1)e_0+(n-1)e_1-e_2-e_3, \mbox{ and } \\
b_3 &=e_0 - e_1 + e_2 -e_3. 
\end{align*} 
Elimination allows us to reduce to a more convenient basis of $\Z\B$: $\{b_0, 2e_1 + e_3, ne_2+ne_3, 2ne_3 \}$.  As we are assuming $n$ is odd, the relevant primes are $2$ and the prime divisors of $n$.   When we localize at $p=2$ we can reduce the basis of $\Z_2\B$ further to $\{e_0+e_1,2e_1,e_2+e_3,2e_3\}$.  Recall the calculation of the zeta function $\zeta_{\Z K_n}$ of the integral adjacency algebra $\Z K_n$ corresponding to the complete graph association scheme $K_n$ on $n$ vertices from subsection (b).  In the notation of (b), we see that $\Z_2\B$ is the direct sum $\Lambda_{01} \oplus \Lambda_{01}$, and so the zeta function will be
$$ \zeta_{\Z_2\B}(s) = \zeta_{\Z_2K_2}(s)^2. $$ % = (1 - 2^{-s} + 2^{1-2s})^2\zeta_{\Z_2}(s)^2$$ 
Similarly, when we localize at an odd prime $p$ dividing $n$ with $m=v_p(n)$, then the basis of $\Z_p\B$ reduces to $\{e_0+e_2,p^me_2, e_1+e_3, p^me_3\}$, so the zeta function will be 
$$ \zeta_{\Z_p\B}(s) = \zeta_{\Z_p K_{p^m}}(s)^2. $$ 

\begin{thm}\label{crowngraph}
For all positive odd integers $n > 1$, if $\B$ is the basis of adjacency matrices of the association scheme corresponding to the crown graph $K_{n,n}-I$, then 
$$ \zeta_{\Z\B}(s) =  \zeta_{\Z K_2}(s)^2 \zeta_{\Z K_n}(s)^2, $$ where $\zeta_{\Z K_n}$ is the  zeta function of the integral adjacency algebra $\Z K_n$ corresponding to the complete graph association scheme $K_n$ on $n$ vertices.
\end{thm}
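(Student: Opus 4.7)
The plan is to prove the theorem via the Euler product formula, assembling local zeta factors that have essentially been identified in the preceding discussion. The relevant primes for $\Z\B$ are $p = 2$ together with the odd prime divisors of $n$; at any non-relevant prime, the order $\Z_p\B$ coincides with the maximal order $\bigoplus_{i=0}^{3} \Z_p e_i$ and therefore contributes $\zeta_{\Z_p}(s)^4$ to the global Euler product, matching the $p$-factor of $\zeta_{\Z K_2}(s)^2 \zeta_{\Z K_n}(s)^2$ coming from its four Dedekind $\zeta_{\Z_p}(s)^2$ pieces.

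At $p = 2$, because $n$ is a $2$-adic unit, one first verifies that $\{e_0+e_1,\,2e_1,\,e_2+e_3,\,2e_3\}$ is an alternative $\Z_2$-basis of $\Z_2\B$; for instance $e_0+e_1 = \tfrac{1}{n}(b_0+b_2)$ and $e_2+e_3 = \tfrac{1}{n}\bigl((n-1)b_0 - b_2\bigr)$ lie in $\Z_2\B$, and then the standard basis can be recovered from this new basis. Orthogonality of the primitive idempotents forces $\Z_2\B$ to split as a ring direct sum of two rank-$2$ $\Z_2$-subalgebras, spanned by $\{e_0+e_1, 2e_1\}$ and $\{e_2+e_3, 2e_3\}$, each isomorphic to the rank-$2$ order $\Lambda_{01} \cong \Z_2 K_2$ from subsection (b). Hence $\zeta_{\Z_2\B}(s) = \zeta_{\Z_2 K_2}(s)^2$. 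The same strategy applies at an odd prime $p \mid n$ with $m = v_p(n)$, now using that $2 \in \Z_p^\times$: one exhibits $\{e_0+e_2,\,p^m e_2,\,e_1+e_3,\,p^m e_3\}$ as a $\Z_p$-basis realizing $\Z_p\B \cong \Z_p K_{p^m} \oplus \Z_p K_{p^m}$, and since the explicit formula (\ref{completegraph}) shows that $\zeta_{\Z_p K_n}(s)$ depends on $n$ only through $v_p(n)$, we may identify $\zeta_{\Z_p K_{p^m}}(s)$ with $\zeta_{\Z_p K_n}(s)$ to obtain $\zeta_{\Z_p\B}(s) = \zeta_{\Z_p K_n}(s)^2$.

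Taking the Euler product $\zeta_{\Z\B}(s) = \prod_p \zeta_{\Z_p\B}(s)$ and matching the local factors to those of $\zeta_{\Z K_2}(s)^2 \zeta_{\Z K_n}(s)^2$ then yields the stated formula. The main technical work lies in verifying the two basis reductions—confirming that the proposed sets really are $\Z_p$-bases of $\Z_p\B$—and it is here that the hypothesis that $n$ is odd is essential: it guarantees $n \in \Z_2^\times$ at $p=2$ and $2 \in \Z_p^\times$ at every odd $p \mid n$, so the inversions in the change-of-basis computations go through. Once the local decompositions are in place, the ring splittings follow immediately from idempotent orthogonality and the rest is a bookkeeping exercise on Euler factors.
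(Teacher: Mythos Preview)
Your proof follows essentially the same route as the paper's: identify the relevant primes, reduce the $\Z_p$-basis of $\Z_p\B$ at $p=2$ and at each odd $p\mid n$ so as to exhibit a ring splitting into two copies of the rank-$2$ order $\Lambda_{0m}$, and then quote the rank-$2$ zeta function from subsection~(b). The paper's argument is exactly this local decomposition, and your added justifications (why oddness of $n$ is needed for the basis reductions, and how the Euler product reassembles) just make explicit what the paper leaves implicit.
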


\section{Cases when $|\B|=3$ }

In this section we compute $\zeta_{\Z\B}(s)$ for the integral adjacency algebras associated to some specific strongly regular graphs using the local zeta integral approach.  In each case the rank of $\B$ is $3$.  The graphs are: 
\begin{itemize} 
\item the Petersen graph (intersection array $[3,2;1,1]$); 
\item the square (intersection array $[2,1;1,2]$); and 
\item the generalized quadrangle $GQ(2,1)$ (intersection array $[4,2;1,2]$). 
\end{itemize} 

%In each case, we obtain the overorders of $\Z_p\B$ by first finding all Hermite normal forms of the finitely many $\Z_p$-lattices containing $\Z_p\B$, and then observing these are not isomorphic as $\Z_p\B$-lattices.  

\medskip
\noindent {\bf (a) The Petersen graph.} As computed in \cite{Hirasaka-Oh2018}, the zeta function of the integral adjacency algebra $\Z P$ of the association scheme corresponding to the Petersen graph $P$ is 
$$ \zeta_{\Z P}(s) = \bigg(\prod_{p=2,3,5} (p^{1-2s}-p^{-s}+1) \bigg) \zeta_{\Z}(s)^3.$$
We can also obtain the zeta function using the Bushnell and Reiner approach.  The Petersen graph is the distance regular graph with intersection array $[3,2;1,1]$, so the regular matrix for our basis element $b_1$ is
$$b_1 = \begin{bmatrix} 0 & 3 & 0 \\ 1 & 0 & 2 \\ 0 & 1 & 2 \end{bmatrix}.$$ 
After using this to generate the remaining basis elements and the character table of the association scheme, we find that the 
primitive idempotents of $\Q\B$ are $e_0 = \frac{1}{10}(b_0+b_1+b_2)$, $e_1=\frac{2}{5}(b_0-\frac{2}{3}b_1+\frac{1}{6}b_2)$, and $e_2=\frac{1}{2}(b_0+\frac{1}{3}b_1-\frac{1}{3}b_2)$.  From these formulas we see that $f(\Gamma: \Z\B)=30$, $b_0 = e_0+e_1+e_2$, $b_1=3e_0-2e_1+e_2$, and $b_2=6e_0+e_1-2e_2$.  

We can choose the basis $\{10 e_0, 5e_1+2e_2, e_0+e_1+e_2\}$ for the order $\Z\B$.  The primes at which the completion of $\Z\B$ is not maximal are $p=2,3,5$.  At $p=2$ the completion $\Z_p \B$ has basis $\{2e_0, e_1, e_0+e_2\}$, at $p=3$ it has basis $\{e_0, 3e_1, e_1+e_2\}$, and at $p=5$ it has basis $\{5e_0, e_2, e_0+e_1\}$. Because in each case the basis is of the form $\{pe_i, e_j, e_i+e_k\}$ for distinct $i,j,k \in \{1,2,3\}$, the factor in the Euler product for each prime $p=2,3,5$ will be identical. Therefore, fix such a $p$, and let $\Lambda_0$ be the maximal order in $\Q_p\B$. Up to ordering of the primitive  idempotents,  we may assume $\Lambda_0=\langle e_0, e_1, e_1+e_2\rangle$ and $\Lambda \coloneqq \Z_p\B=\langle e_0, pe_1, e_1+e_2\rangle$.

Then $\zeta_{\Z_p\B}(s)=Z_{\Lambda}(\Lambda, \Lambda_0; s)+Z_{\Lambda}(\Lambda, \Lambda;s)$, where as before

\[ Z_{\Lambda}(\Lambda, M; s)=\mu(\Aut M)^{-1} (\Lambda: M)^{-s} \int_{\Q\B^\times}  \Phi_{\{M:\Lambda\}}(x) \left\Vert x \right\Vert_V^s d^\times x,\] and in this case $\Aut \Lambda=\Lambda^\times$ and $ \Aut \Lambda_0=\Lambda_0^\times$.

We can compute that $\{\Lambda_0:\Lambda\}$ is a $\Z_p$-lattice with basis $\{e_0, pe_1, pe_2\}$, and of course $\{\Lambda:\Lambda\}=\Lambda$ with basis $\{e_0, pe_1, e_1+e_2\}$. Then 

\[ Z_{\Lambda}(\Lambda, \Lambda_0; s)=p^s  \left(\int_{\Z_p-\{0\}} (\Z_pa:\Z_p)^s d^\times a \right) \left(\int_{p\Z_p-\{0\}} (\Z_pb:\Z_p)^s d^\times b\right)^2= p^{-s}(1-p^{-s})^{-3} \] and

\[ Z_{\Lambda}(\Lambda, \Lambda; s)=(p-1)  \left(\int_{\Z_p-\{0\}} (\Z_pa:\Z_p)^s d^\times a \right) \left(\int_{p\Z_pe_1\oplus \Z_p(e_1+e_2)} (\tilde{\Lambda}y:\tilde{\Lambda})^s d^\times y\right),\] where $\tilde{\Lambda}$ is the maximal order in $\Q_pe_1 \oplus \Q_pe_2$.  Using  Equation (\ref{Lambda_mm-i}) in the case of $\Lambda_{10}$,
we obtain that 

\[ Z_{\Lambda}(\Lambda, \Lambda; s)=(p-1)\left( p^{-2s}(1-p^{-s})^{-3}+\frac{1}{p-1} (1-p^{-s})^{-1}\right)=(p^{1-2s}-2p^{-s}+1)(1-p^{-s})^{-3},\]
and therefore 

\[ \zeta_{\Z_p\B}(s)=(p^{1-2s}-p^{-s}+1)\zeta_{\Z_p}(s)^3.\]

So we can conclude that, for the integral adjacency algebra corresponding to the Petersen graph $P$, 
\begin{equation}\label{Petersengraph}
\zeta_{\Z P}(s) = \big( \prod_{p=2,3,5} (p^{1-2s}-p^{-s}+1) \big) \zeta_{\Z}(s).
\end{equation} 

\medskip
\noindent {\bf (b) The square.}  Let $\B=\{b_0,b_1,b_2\}$ be the standard basis of the association scheme corresponding to the square $S$, with elements of valencies $1$, $1$, and $2$, respectively.  As the primitive idempotents of $\Q\B$ are $e_0 = \frac{1}{4}(b_0+b_1+b_2)$, $e_1=\frac{1}{4}(b_0+b_1-b_2)$, and $e_2=\frac{1}{2}(b_0-b_1)$, an integral basis of $\Z\B$ is $\{4e_1,2e_2, b_0=e_0+e_1+e_2\}$.   This tells us the only prime relevant to the zeta function calculation is $p=2$.  

Let $\Z_2\B = \langle b_0, 4e_1, 2e_2 \rangle$.  We will use this notation to express $\Z_2$-lattices in this section.   The $\Z_2\B$-lattices between $\Lambda_0$ and $\Z_2\B$ are: $\Z_2\B$, $\langle e_0+e_1,4e_1,e_2 \rangle$, $\langle b_0, 2e_1, 2e_2 \rangle$, $\langle b_0, 2e_1+e_2, 2e_2 \rangle$, $\langle e_0+e_1,2e_1,e_2 \rangle$, $\langle e_0+e_2,e_1,2e_2 \rangle$, $\langle e_0,e_1+e_2,2e_2 \rangle$, and the maximal order $\Lambda_0 =\langle e_0,e_1,e_2 \rangle$.  By Lemma \ref{LatticesAbove} and Remark \ref{HermiteZeroes}, they give a complete set of representatives of isomorphism classes of $\Z_2\B$-lattices in $\Q \B$.  The index of $\Z_2\B$ in these lattices is $1$, $2$, $2$, $2$, $4$, $4$, $4$, and $8$, respectively.  To calculate the indices of unit groups of these orders, note that arbitrary units of $\Lambda_0$ are of the form $u_0e_0+u_1e_1+u_2e_2$ with $u_i \in 1 + 2\Z_2$ for $i=0,1,2$, so the index of a unit group of an order in $\Lambda_0^{\times}$ is increased by a factor of $2$ when an additional congruence condition mod $2^2$ is introduced on one of the $u_i$.  It follows that the unit groups of $\Z_2\B$ and $\langle e_0+e_1,4e_1,e_2 \rangle$ have index $2$ in $\Lambda_0^{\times},$ and the unit groups of the others have index $1$.   

Since the $\Z_2\B$-lattices $M$ under consideration all contain $b_0=1$, we always have $\{ M : \Z_2\B \} \subseteq \Z_2\B$, and may satisfy additional conditions.  If $M = \langle b_0,g_1,g_2 \rangle$, then the additional conditions arise from $g_1x \in \Z_2\B$ and $g_2x \in \Z_2\B$.  For example, if $x = x_0 b_0 + 4x_1e_1 + 2x_2e_2 \in \{ \langle b_0,4e_1,e_2 \rangle : \Z_2\B \}$, then $4e_1 x = 4(x_0+4x_1)e_1$ and $e_2x=(x_0+2x_2)e_2$ lie in $\Z_2\B$ if and only if $x_0 \in 2\Z_2$ and $x_1,x_2 \in \Z_2$.  Calculating all seven of the complementary lattices for our overorders in this way gives $ \{ \Z_2\B: \Z_2\B \} = \Z_2\B, $

\begin{align*}
 \langle 2b_0,4e_1,2e_2 \rangle &  =\{ \langle b_0,4e_1,e_2 \rangle : \Z_2\B \} =  \{ \langle b_0,2e_1,2e_2 \rangle : \Z_2\B \} \\
 & = \{ \langle b_0,e_1,2e_2 \rangle : \Z_2\B \} =   \{ \langle b_0,2e_1+e_2,2e_2 \rangle : \Z_2\B \} \mbox{, and }   \\
 \langle 4b_0, 4e_1, 2e_2 \rangle & = \{ \langle b_0, e_1+e_2,2e_2 \rangle : \Z_2\B \} = \{ \langle b_0,2e_1,e_2 \rangle : \Z_2\B \} = \{ \Lambda_0 : \Z_2\B \} . 
\end{align*}

\medskip
Let us write $\zeta$ for $(1-2^{-s})^{-1}$.  Now, $\langle 4b_0, 4e_1, 2e_2 \rangle = \langle 4e_0,4e_1,2e_2 \rangle$, so 
$$\displaystyle{\int_{A^{\times} \cap \langle 4e_0,4e_1,2e_2 \rangle}} ||x||_V^s d^{\times}x = \bigg(\displaystyle{\int_{4\Z_2-\{0\}}} (\Z_2a:\Z_2)^s d^{\times}a \bigg)^2 \bigg(\displaystyle{\int_{2\Z_2-\{0\}}} (\Z_2b: \Z_2)^s d^{\times}b \bigg) $$
$$ = (2^{-2s}\zeta)^2(2^{-s}\zeta) = 2^{-5s} \zeta^3. $$

Also $\langle 2b_0,4e_1,2e_2 \rangle = \langle 2e_0+2e_1,4e_1,2e_2 \rangle$.  From Section \ref{dim2}, with $B = \Q_2(e_0+e_1) + \Q_2e_1$ having maximal order $\tilde{\Lambda}_{00}=\Z_2[e_0+e_1, e_1]$, we saw that 
$$\displaystyle{\int_{B^{\times} \cap \tilde{\Lambda}_{01}}} ||y||_V^s d^{\times}y = (2^{-2s}(1-2^{-s})^{-2}+1).$$  
So calculating as in Section \ref{dim2} we get
$$\begin{array}{rcl} 
\displaystyle{\int_{A^{\times} \cap \langle 2b_0,4e_1,2e_2\rangle}} ||x||_V^s d^{\times}x &=& \bigg(\displaystyle{\int_{B^{\times} \cap \tilde{\Lambda}_{12}}} ||y||_V^2 d^{\times}y\bigg) \bigg(\displaystyle{\int_{2\Z_2-\{0\}}} (\Z_2b: \Z_2)^s d^{\times}b \bigg) \\
&=& \big( 2^{-2s}(2^{-2s} \zeta^2 +1) \big) \big(2^{-s} \zeta \big) \\
&=& (2^{1-5s}-2^{1-4s} + 2^{-3s})\zeta^3. 
\end{array}$$

\medskip
Finally, $\Z_2\B = \langle b_0,4e_1,2e_2 \rangle$.  This time we decompose the integral 
$\displaystyle{\int_{A^{\times} \cap \langle b_0,4e_1,2e_2 \rangle}} ||x||_V^s d^{\times}x$ over $2$ disjoint subsets, $\Z_2\B^{\times}$ and $A^\times \cap \langle 2b_0,4e_1,2e_2 \rangle$.
For $\langle b_0,4e_1,2e_2 \rangle^\times$, we use the fact that $||x||_V = (\Lambda_0 x : \Lambda_0) =1$ for $x \in \langle b_0,4e_1,2e_2 \rangle^{\times}$, so the integral in this case will be the measure $\mu(\langle b_0,4e_1,2e_2 \rangle^{\times}) = 2^{-1}$.  
The integral for the second subset has been computed above. 
Putting this all together, 
\begin{align*}
\displaystyle{\int_{A^{\times} \cap \langle b_0,4e_1,2e_2 \rangle}} ||x||_V^s d^{\times}x  & = 2^{-1}+(2^{1-5s}-2^{1-4s} + 2^{-3s})\zeta^3\\
& = \left( 2^{1-5s}-2^{1-4s}+2^{-1-3s}+3\cdot 2^{-1-2s}-3\cdot 2^{-1-s}+2^{-1} \right) \zeta^3.
\end{align*}

Therefore, the genus zeta functions we need for $\Lambda = \Z_2\B$ are: 
$$\begin{array}{l}
Z_{\Lambda}(\Lambda,\Lambda_0;s) = (1)(2^{3s})(2^{-5s})\zeta^3 = 2^{-2s} \zeta^3,  \\
Z_{\Lambda}(\Lambda, \langle b_0,2e_1,e_2 \rangle ;s) = (1)(2^{2s})(2^{-5s})\zeta^3 = 2^{-3s} \zeta^3,  \\
Z_{\Lambda}(\Lambda,\langle b_0,e_1+e_2,2e_2 \rangle ;s) = (1)(2^{2s})(2^{-5s})\zeta^3 = 2^{-3s} \zeta^3, \\
Z_{\Lambda}(\Lambda,\langle b_0,e_1,2e_2 \rangle;s) = (1)(2^{2s})(2^{1-5s}-2^{1-4s} + 2^{-3s})\zeta^3 = (2^{1-3s}-2^{1-2s}+2^{-s})\zeta^3, \\
Z_{\Lambda}(\Lambda,\langle b_0,2e_1,2e_2 \rangle ;s) = (1)(2^{s})(2^{1-5s}-2^{1-4s} + 2^{-3s})\zeta^3 = (2^{1-4s}-2^{1-3s}+2^{-2s})\zeta^3,\\
Z_{\Lambda}(\Lambda,\langle b_0,4e_1,e_2 \rangle;s) = (2)(2^{s})(2^{1-5s}-2^{1-4s} + 2^{-3s})\zeta^3 = (2^{2-4s}-2^{2-3s}+2^{1-2s})\zeta^3, \\
 Z_{\Lambda}(\Lambda,\langle b_0,2e_1+e_2,2e_2 \rangle;s) = (1)(2^{s})(2^{1-5s}-2^{1-4s} + 2^{-3s})\zeta^3 = (2^{1-4s}-2^{1-3s}+2^{-2s})\zeta^3, 
\end{array}$$
and 
\begin{align*}
Z_{\Lambda}(\Lambda,\Lambda ;s) & = (2)(1)\left( 2^{1-5s}-2^{1-4s}+2^{-1-3s}+3\cdot 2^{-1-2s}-3\cdot 2^{-1-s}+2^{-1} \right) \zeta^3 \\
 & = \big( 2^{2-5s}-2^{2-4s}+2^{-3s}+3\cdot 2^{-2s}-3\cdot 2^{-s}+1\big) \zeta^3.
\end{align*}

$\zeta_{\Z_2\B}(s)$ will be the sum of these eight genus zeta functions.
Adding them together, we get 
\[ \zeta_{\Z_2\B}(s) = (2^{2-5s}+ 2^{2-4s}-3\cdot 2^{-3s}+3\cdot2^{1-2s}-2^{1-s}+1) \zeta_{\Z_2}(s)^3, \]
and since $p=2$ is the only relevant prime, this determines $\zeta_{\Z S}(s)$.

\begin{thm}\label{Squaregraph}
The zeta function for the integral adjacency algebra $\Z S$ of the association scheme $S$ of rank $4$ corresponding to the square is: 
$$\zeta_{\Z S}(s) =(2^{2-5s}+ 2^{2-4s}-3\cdot 2^{-3s}+3\cdot2^{1-2s}-2^{1-s}+1) \zeta_{\Z}(s)^3.$$
\end{thm}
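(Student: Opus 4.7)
The plan is to apply the Bushnell--Reiner local zeta integral machinery at the single relevant prime. First I would locate the primitive idempotents of $\Q\B$ from the character table of the square association scheme and write $\Z\B$ in a convenient basis in the free $\Z$-module on $\{e_0,e_1,e_2\}$; the formulas for $e_0,e_1,e_2$ give the basis $\{4e_1,2e_2,b_0\}$, and the computation of $f(\Gamma:\Z\B)=4$ immediately shows that the only relevant prime is $p=2$. So the Euler product collapses to $\zeta_{\Z S}(s)=\zeta_{\Z_2\B}(s)\prod_{p\ne 2}\zeta_{\Z_p}(s)^3$, and the task reduces to computing $\zeta_{\Z_2\B}(s)$.

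Next I would enumerate the isomorphism classes of full $\Lambda$-lattices in $\Q_2\B$ (where $\Lambda=\Z_2\B$). Using Lemma~\ref{LatticesAbove} and Remark~\ref{HermiteZeroes}, it suffices to list the $\Lambda$-sublattices of the maximal order $\Lambda_0=\langle e_0,e_1,e_2\rangle$ that contain $\Lambda$; enumerating Hermite normal forms gives exactly eight such lattices. For each representative $M$ I would compute the three ingredients of the Bushnell--Reiner formula~(\ref{genuszetaeqn}): the index $(\Lambda:M)$ (an easy determinant computation), the unit-group measure $\mu(\Aut M)=(\Lambda_0^\times:M^\times)^{-1}$ (obtained by counting congruence conditions modulo $4$ on the coordinates $u_0,u_1,u_2\in 1+2\Z_2$ of a unit in $\Lambda_0^\times$), and the dual lattice $\{M:\Lambda\}$ (obtained by testing closure of multiplication on a generating set).

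The main obstacle is then evaluating the local zeta integrals $\int_{A^\times}\Phi_{\{M:\Lambda\}}(x)\|x\|_V^s\,d^\times x$ for the various $M$. These split according to the Hermite shape of $\{M:\Lambda\}$: when $\{M:\Lambda\}$ decomposes as a product of principal ideals in the three factors $\Q_2 e_i$, the integral factors as a product of one-dimensional Tate integrals giving powers of $\zeta_{\Z_2}(s)=(1-2^{-s})^{-1}$; when the lattice does not so decompose, the integral must be reduced to a two-dimensional analogue like the one handled in Section~\ref{dim2}. The seven ``conductor'' lattices produced above are either $\Lambda$ itself, $\langle 2b_0,4e_1,2e_2\rangle$, or $\langle 4b_0,4e_1,2e_2\rangle$; I would recycle the two-dimensional formula from Equation~(\ref{Lambda_mm-i}) (applied to the $2$-dimensional subalgebra $\Q_2(e_0+e_1)\oplus \Q_2 e_1$) to handle the first two, and use the decomposition $\Lambda^\times \sqcup (A^\times\cap\langle 2b_0,4e_1,2e_2\rangle)$ to handle the self-integral over $\Lambda$, where the unit contribution is simply $\mu(\Lambda^\times)=2^{-1}$ because $\|x\|_V=1$ on units.

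Finally I would assemble the eight genus zeta functions $Z_\Lambda(\Lambda,M;s)$, each a product of the three quantities above, and sum them to obtain $\zeta_{\Z_2\B}(s)$. After simplification the degree-$5$ polynomial factor $2^{2-5s}+2^{2-4s}-3\cdot 2^{-3s}+3\cdot 2^{1-2s}-2^{1-s}+1$ emerges, and multiplying by $\zeta_{\Z}(s)^3/\zeta_{\Z_2}(s)^3$ (i.e.\ restoring the unramified Euler factors) yields the stated global formula for $\zeta_{\Z S}(s)$. The bookkeeping for the eight cases is the most error-prone step, but each individual calculation is a routine application of the two tools already developed: Hermite normal forms plus the rank-$2$ integral from Section~\ref{dim2}.
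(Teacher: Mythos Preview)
Your proposal is correct and follows essentially the same approach as the paper: you identify $p=2$ as the only relevant prime, enumerate the eight $\Lambda$-lattices between $\Lambda$ and $\Lambda_0$ via Lemma~\ref{LatticesAbove} and Remark~\ref{HermiteZeroes}, observe that the conductor lattices $\{M:\Lambda\}$ take only the three values $\Lambda$, $\langle 2b_0,4e_1,2e_2\rangle$, $\langle 4b_0,4e_1,2e_2\rangle$, evaluate the corresponding integrals using the rank-$2$ formula~(\ref{Lambda_mm-i}) together with the decomposition $A^\times\cap\Lambda=\Lambda^\times\sqcup(A^\times\cap\langle 2b_0,4e_1,2e_2\rangle)$, and sum the eight genus zeta functions. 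This is exactly what the paper does.
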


\begin{rmk} {\rm The square gives the first example the authors have computed for which the leading term of the functional equation polynomial differs in degree from the one shown to hold for group ring orders in \cite[pg. 158]{Bushnell-Reiner1980}. The authors do not currently know how to predict the degree and leading coefficient of this functional equation polynomial in these cases.  
%Further study will be needed to determine the nature of the functional equation for zeta functions of standard integral table algebras.  %It seems an interesting question to predict the degree and leading coefficient of this functional equation polynomial.
}  \end{rmk}

\medskip

\noindent {\bf (c) The generalized quadrangle $GQ(2,1)$.} Let $\B=\{b_0, b_1, b_2\}$ be the standard basis of the association scheme corresponding to $GQ(2,1)$ with intersection array $[4, 2; 1,2 ]$. The primitive idempotents of $\Q{\bf B}$ are:
$$ e_0 =\frac{1}{9}(b_0+b_1+b_2), \quad e_1 =\frac{1}{9}(4b_0-2b_1+b_2), \mbox{ and } e_2 = \frac{1}{9}(4b_0+b_1-2b_2). $$

We choose the integral basis $\{b_0, 3(e_1+2e_2),9e_2 \}$ for $\Z\B$. The only prime where $\Z \B$ is not maximal is at $p=3$.  Let $\Lambda \coloneqq \Z_3\B = \langle b_0, 3e_1+6e_2, 9e_2 \rangle$.

The $\Z_3$-lattices containing $\Z_3\B$ are $\Lambda_{000} = \langle e_0, e_1, e_2 \rangle$, $\Lambda_{0\!+\!01} = \langle e_0, e_1+e_2, 3e_2 \rangle$, $\Lambda_{01\!+\!0}=\langle e_0+e_1, 3e_1, e_2 \rangle$, $\Lambda_{0'1\!+\!0} =\langle e_0+e_2, e_1, 3e_2 \rangle$, $\Lambda_{011}= \langle e_0+e_1+e_2, 3e_1, 3e_2 \rangle$, and $\Lambda$, which are all overorders of $\Z_3\B$, as well as  $M=\langle e_0+2e_2, e_1+2e_2, 3e_2 \rangle$, which is not an overorder, but nevertheless a $\Lambda$-lattice. Lemma  \ref{LatticesAbove}  and Remark \ref{HermiteZeroes} imply we now have a complete set of representatives of isomorphism classes of $\Z_3\B$-lattices in $\Q \B$.

The nontrivial indices are given by 
$$[\Lambda_{000}:\Lambda]=3^3, [\Lambda_{0\!+\!01}:\Lambda] = [\Lambda_{01\!+\!0}:\Lambda]=[\Lambda_{0'1\!+\!0}:\Lambda]= [M:\Lambda]=3^2, \mbox{ and } [\Lambda_{011}:\Lambda]=3.$$  For the unit group measures we find 
$$\mu(\Lambda_{000}^\times)=1, \mu(\Lambda_{0\!+\!01}^\times)=\mu(\Lambda_{01\!+\!0}^\times)=\mu(\Lambda_{0'1\!+\!0}^\times)=\frac12, \mu(\Lambda_{011}^\times)= \frac14, \mbox{ and } \mu(\Lambda^\times) = \frac{1}{12},$$  as well as $\{M:M\}=\langle b_0, 3e_1, 3e_2\rangle  $ and thus $$\mu(\Aut M)=\frac{1}{4}.$$  We illustrate the type of calculations used to get the unit group measures by looking at $\mu(\Lambda^\times)$, where we can work modulo $27$. Then the units in $\Lambda_0 \pmod{27}$ are of the form $(u_1, u_2, u_3)$, where $u_i \in (\Z/27\Z)^\times$, and there are $18^3$ choices. On the other hand, the units in $\Lambda \pmod{27}$ are of the form $(u, u+3v, u+6v+9w)$, where $u \in (\Z/27\Z)^\times$, $v \in \Z/9\Z$, and $w \in \Z/3\Z$, so there are $18\cdot 9 \cdot 3$ choices, and the index is $(\Lambda_0^\times: \Lambda^\times)=12$.

Next, we compute the complementary $\Z_3$-lattices using standard linear algebraic techniques, and find:   
\begin{align*}
\{\Lambda_{000}:\Lambda\} =\langle 9e_0,9e_1,9e_2 \rangle = \Lambda_{222}, & & \{\Lambda_{0\!+\!01}:\Lambda \} = \langle 9e_0, 3e_1+6e_2, 9e_2\rangle,\, \\
\{\Lambda_{01\!+\!0}:\Lambda\} =\langle 3e_0+6e_1,9e_1,9e_2 \rangle, \, &  &\{\Lambda_{0'1\!+\!0}:\Lambda\} =\langle 3e_0+6e_2, 9e_1, 9e_2 \rangle, \\
 \{\Lambda_{011}:\Lambda\} = \langle 3b_0, 3e_1+6e_2, 9e_2 \rangle,  \,\,\,& &  \{M: \Lambda\}=\langle 3b_0, 9e_1, 9e_2\rangle, \qquad \quad  \,\,\,\,\,
\end{align*}
and  $\{ \Lambda : \Lambda \} = \langle b_0, 3e_1+6e_2, 9e_2 \rangle = \Lambda. $

Now we compute the local zeta integrals for these $\Z_3$-orders relative to $\Lambda$.  
First, since $\Lambda_{222} = \langle 9e_0, 9e_1, 9e_2 \rangle$, 

\begin{equation*}
\label{rank3max}
\displaystyle{\int_{A^{\times} \cap \Lambda_{222}}} ||x||_V^s d^{\times}x = \bigg(\displaystyle{\int_{9\Z_3-\{0\}}} (\Z_3a:\Z_3)^s d^{\times}a \bigg)^3 =\bigg( 3^{-2s} \zeta \bigg)^3= 3^{-6s} \zeta^3. 
\end{equation*}
and therefore
$$Z(\Lambda, \Lambda_{000}; s)= 3^{3s} \cdot 3^{-6s} \zeta^3 = 3^{-3s}\zeta^3. $$

For $Z(\Lambda, M; s)$ we integrate over $A^\times \cap \langle 3b_0, 9e_1, 9e_2\rangle$. If $x \in \{ M: \Lambda\}$, then   $x=3\gamma (e_0+e_1+e_2)+9\beta e_1+9\alpha e_2$  for some $\alpha, \beta, \gamma \in \Z_3$. Either $\gamma \in 3\Z_3$, or $\gamma \in u+3\Z_3$ for $u=1,2$. In the first case, $x\in \Lambda_{222}$. Otherwise, $x \in 3u(1+\Lambda_{111})$. Therefore,
$$\begin{array}{l}
\displaystyle{\int_{A^{\times} \cap \langle 3b_0,9e_1,9e_2 \rangle}} ||x||_V^s d^{\times}x =
\displaystyle{\int_{A^{\times} \cap \Lambda_{222}}} ||x||_V^s d^{\times}x + 
2 \bigg( \displaystyle{\int_{3(1+3\Z_3)-\{0\}}} ||a||^s d^\times a \bigg)^3 \\
\\
= \big( 3^{-6s} \zeta^3 \big) + 2 \big( 2^{-1} 3^{-s} \big)^3 =  (2^{-2} 3^{1-6s}+2^{-2} 3^{1-5s}-2^{-2} 3^{1-4s}+2^{-2} 3^{-3s})\zeta^3,
\end{array}$$   
and 
$$\begin{array}{rcl}
Z(\Lambda, M;s) &=& 4 \cdot 3^{2s} \cdot (2^{-2} 3^{1-6s}+2^{-2} 3^{1-5s}-2^{-2} 3^{1-4s}+2^{-2} 3^{-3s})\zeta^3 \\
&=& (3^{-s}-3^{1-2s}+3^{1-3s}+3^{1-4s} ) \zeta^3. \\
\end{array}$$

For $Z(\Lambda,\Lambda_{0'1\!+\!0};s)$, we need to integrate over $A^\times \cap \langle 3e_0+6e_2, 9e_1, 9e_2 \rangle$.    If $x = \gamma (3e_0+6e_2)+\beta (9e_1)+\alpha (9e_2)$ for some $\alpha, \beta, \gamma \in \Z_3$,  then either $\gamma  = 3\gamma'$, or $\gamma =  u + 3\gamma'$ for $u \in \{1,2\}$ and $\gamma'\in \Z_3$. In the first case,  $x \in \Lambda_{222}$.  Otherwise  $x=3(u+3\gamma')e_0+9\beta e_1+3(2u+3(\alpha+2\gamma'))e_2$ and $x \in u(3e_0+6e_2)+\Lambda_{222}$.   Therefore,   
$$\begin{array}{l}
\displaystyle{\int_{A^{\times} \cap \langle 3e_0+6e_2,9e_1,9e_2 \rangle}} ||x||_V^s d^{\times}x =
\displaystyle{\int_{A^{\times} \cap \Lambda_{222}}} ||x||_V^s d^{\times}x + 
2 \bigg(\displaystyle{\int_{A^{\times} \cap ((3e_0+6e_2)+\Lambda_{222})}} ||x||_V^s d^{\times}x \bigg) \\ 
 \\
= \big( 3^{-6s} \zeta^3 \big) + 2 \bigg(\displaystyle{\int_{9\Z_3-\{0\}}} ||b||^s d^{\times}b \bigg) \bigg( \displaystyle{\int_{3(1+3\Z_3)-\{0\}}} ||a||^s d^\times a \bigg) \bigg( \displaystyle{\int_{3(2+3\Z_3)-\{0\}}} ||c||^s d^\times c \bigg) \\
\\
= \big( 3^{-6s} \zeta^3 \big) + 2 \big( 3^{-2s} \zeta \big) \big( 2^{-1} 3^{-s} \big)^2 = \big( 3^{-6s} \zeta^3 \big) + 2^{-1} 3^{-4s} \zeta,
\end{array}$$
so 
\begin{equation}
\label{rank3nonmax1}
\displaystyle{\int_{A^{\times} \cap \langle 3e_0+6e_2,9e_1,9e_2 \rangle}} ||x||_V^s d^{\times}x = (2^{-1} 3^{-4s} - 3^{-5s} + 2^{-1} 3^{1-6s}) \zeta^3.
\end{equation}
Therefore, 
$$\begin{array}{rcl}
Z(\Lambda, \Lambda_{0'1\!+\!0};s) &=& 2 \cdot 3^{2s} \cdot (2^{-1} 3^{-4s} - 3^{-5s} + 2^{-1} 3^{1-6s}) \zeta^3  \\
&=& (3^{-2s} - 2\cdot 3^{-3s} +  3^{1-4s}) \zeta^3. \\
\end{array}$$

Since the integrals over $A^\times \cap \langle 9e_0, 3e_1+6e_2, 9e_2 \rangle$,  $A^\times \cap \langle 3e_0+6e_1, 9e_1, 9e_2 \rangle$, and $A^\times \cap \langle 3e_0+6e_2,9e_1,9e_2 \rangle$ will give the same result, we also have 
$$Z(\Lambda, \Lambda_{0'1\!+\!0};s) =Z(\Lambda, \Lambda_{01\!+\!0};s)= Z(\Lambda, \Lambda_{0\!+\!01};s) = (3^{-2s} - 2\cdot 3^{-3s} +  3^{1-4s}) \zeta^3. $$

For $Z(\Lambda,\Lambda_{011};s)$ our integral is over $A^\times \cap \langle 3b_0, 3e_1+6e_2, 9e_2 \rangle$.  There are two cases for $x = \gamma(3b_0) + \beta(3e_1+6e_2) + \alpha(9e_2)$ with $\alpha, \beta, \gamma \in \Z_3$: either $\gamma=3\gamma'$, or $\gamma=3\gamma'+u$ for some $u \in \{1,2\}$ and  $\gamma'\in \Z_3$.  In the first case, $x \in \langle 9b_0, 3e_1+6e_2, 9e_2 \rangle$, and we have computed the associated integral in Equation (\ref{rank3nonmax1}).  In the second case, we have $x = 3u + \gamma'(9b_0) + \beta(3e_1+6e_2) + \alpha (9e_2) \in 3u(1+3u\langle 3b_0, e_1+2e_2, 3e_2 \rangle)$, a multiplicative translate of $3(1+\langle 3b_0, e_1+2e_2, 3e_2 \rangle)$.  So 

\[\displaystyle{\int  \displaylimits_{A^{\times} \cap \langle 3b_0, 3e_1+6e_2,9e_2 \rangle}} \!\!\!\!\!\!\!\! ||x||_V^s d^{\times}x = \int  \displaylimits_{A^\times \cap \langle 9b_0, 3e_1+6e_2, 9e_2 \rangle} \!\!\!\!\!\!\!\! ||x||_V^s d^{\times}x + 2 \cdot \int  \displaylimits_{3(1+\langle 3b_0, e_1+2e_2,3e_2 \rangle)} \!\!\!\!\!\!\!\! ||x||_V^s d^{\times}x. \]

The first integral is already computed.   To calculate the second integral, note that $\langle 3b_0, e_1+2e_2,3e_2 \rangle = \langle 3e_0, e_1+2e_2,3e_2 \rangle$, so an element of $3(1+\langle 3b_0, e_1+2e_2,3e_2 \rangle)$ is of the form $3((1+3\alpha)e_0 + (1+\beta)e_1 + (1 + 2\beta + 3\gamma)e_2)$ for some $\alpha, \beta, \gamma \in \Z_3$.  Therefore, if   $L = 3((1+\Z_3)(e_1+2e_2)+(1+3\Z_3)e_2)=3((e_1+e_2)+\langle e_1+2e_2, 3e_2 \rangle)$ and $B$ is the subring $\langle e_1, e_2 \rangle$,  
then 
\[\int \displaylimits_{A^\times \cap 3(1+\langle 3b_0,e_1+2e_2,3e_2 \rangle)} \!\!\!\!\!\!\!\!\!\! ||x||_V^s d^{\times}x =\left(\int \displaylimits_{3(3\Z_3+1)}(\Z_3a:\Z_3)^sd^\times a\right) \left(\int \displaylimits_{\substack{B^\times \cap L}} ( B y: B)d^\times y \right).\]
To compute the second factor, we consider the three cases for an element $y=(3+3\beta)e_1+(3+6\beta+9\gamma)e_2\in L$,  depending on whether $\beta=3d, 3d+1$ or $3d+2$, and we can decompose the integral 

\begin{align*}
\int \displaylimits_{B^\times \cap L} ||x||_V^s d^{\times}x & =\int \displaylimits_{B^\times \cap \{18d+9\gamma+3, 9d+3\}}+\int \displaylimits_{B^\times \cap \{(18d+9\gamma+3, 9d+3)\}}+\int \displaylimits_{B^\times \cap \{(18d+9\gamma+6, 9d+6)\}}\\
 &= \int\displaylimits_{3(3\Z_3+1)}\int \displaylimits_{3(3\Z_3+1)}+ \int \displaylimits_{9\Z_3-\{0\}} \int \displaylimits_{3(3\Z_3+2)} +\int \displaylimits_{3(3\Z_3+2)} \int \displaylimits_{9\Z_3-\{0\}}\\
 &= (2^{-1} 3^{-s})^2+3^{-2s}\zeta \cdot 2^{-1} 3^{-s}+  2^{-1} 3^{-s}\cdot 3^{-2s}\zeta \\
& = \frac{1}{4}(3^{-2s}+3^{1-3s})\zeta.
\end{align*}
Therefore, 
\[\int  \displaylimits_{3(1+\langle 3b_0, e_1+2e_2,3e_2 \rangle)} ||x||_V^s d^{\times}x=2^{-1}3^{-s}\cdot 2^{-2} (3^{-2s}+3^{1-3s})\zeta =2^{-3}(3^{-3s}+3^{1-4s})\zeta.\]
So the integral we need for the last two cases works out to  
\[\displaystyle{\int  \displaylimits_{A^\times \cap \langle 3b_0, 3e_1+6e_2, 9e_2 \rangle}} \!\!\!\!\!\!\!\! ||x||_V^s d^{\times}x = \int  \displaylimits_{A^\times \cap \langle 9b_0,3e_1+6e_2,9e_2\rangle} \!\!\!\!\!\!\!\! ||x||_V^s d^{\times}x + 2 \!\!\!\!\!\!\!\!\int  \displaylimits_{3(1+\langle 3b_0,e_1+2e_2,3e_2\rangle)} \!\!\!\!\!\!\!\! ||x||_V^s d^{\times}x \]
\[ = (2^{-1} 3^{-4s} - 3^{-5s} + 2^{-1} 3^{1-6s}) \zeta^3+2^{-2}(3^{-3s}+3^{1-4s})\zeta \]
\begin{equation}
\label{rank3nonmax2}
= 2^{-2}(3^{2-6s}-3^{2-5s}+3^{1-4s}+3^{-3s}) \zeta^3
\end{equation}
This makes 
\begin{align*}
Z(\Lambda, \Lambda_{011}; s) & = 4\cdot 3^{s} \cdot 2^{-2}(3^{2-6s}-3^{2-5s}+3^{1-4s}+3^{-3s})\zeta^3 \\ 
& = ( 3^{2-5s}- 3^{2-4s}+3^{1-3s}+ 3^{-2s})\zeta^3.
\end{align*} 

For $Z(\Lambda,\Lambda;s)$ we need to integrate over $A^\times \cap \Lambda = A^\times \cap \langle b_0, 3e_1+6e_2, 9e_2 \rangle$.  If $x = \gamma b_0 + \beta(3e_1+6e_2) + \alpha (9e_2) \in \Lambda$  for some $\alpha, \beta, \gamma \in \Z_3$, then either $\gamma=3\gamma'$ for $\gamma' \in \Z_3$ or $\gamma=3\gamma'+u$ for some $u \in {1,2}$.  In the first case, $x \in \langle 3b_0,3e_1+6e_2,9e_2 \rangle$, and we have computed the associated integral in Equation (\ref{rank3nonmax2}).  In the second case, we have $x \in u(1 + u\langle 3b_0, 3e_1+6e_2, 9e_2 \rangle)$.  Since $1 + \langle 3b_0,3e_1+6e_2,9e_2 \rangle$ is a subgroup of $\Lambda_{000}^\times$ with measure $\mu(1+\langle 3b_0,3e_1+6e_2,9e_2 \rangle) = \frac{1}{24}$, we have 

\begin{align*}
\displaystyle{\int \displaylimits_{A^{\times} \cap \langle b_0,3e_1+6e_2,9e_2 \rangle}} ||x||_V^s d^{\times}x & = \int \displaylimits_{A^\times \cap \langle 3b_0,3e_1+6e_2,9e_2\rangle} ||x||_V^s d^{\times}x + 2 \int \displaylimits_{1+\langle 3b_0,3e_1+6e2,9e_2 \rangle} ||x||_V^s d^{\times}x \\
 & =  2^{-2}(3^{2-6s}-3^{2-5s}+3^{1-4s}+3^{-3s})\zeta^3+ 2 \cdot 2^{-3}\cdot 3^{-1}\\
& = \frac{1}{12} (3^{3-6s}-3^{3-5s}+3^{2-4s}+2\cdot 3^{-3s}+3^{1-2s}-3^{1-s}+1)\zeta^3.
\end{align*}
So 
\[ Z(\Lambda, \Lambda; s) = (3^{3-6s}-3^{3-5s}+3^{2-4s}+2\cdot 3^{-3s}+3^{1-2s}-3^{1-s}+1)\zeta^3.   \]
 
The zeta function we want is obtained from the sum of the the local zeta integrals obtained above. %After finding the sum of these Bushnell-Reiner integrals 

\begin{thm}\label{GenQuadranglegraph}
The zeta function of the integral adjacency algebra of the association scheme corresponding to the generalized quadrangle graph $GQ(2,1)$ is: 
$$\zeta_{\Z GQ(2,1)}(s)=(3^{3-6s}-2\cdot  3^{2-5s}+4\cdot 3^{1-4s}+3^{1-3s}+ 4\cdot 3^{-2s}- 2\cdot 3^{-s}+1)\zeta_{\Z}(s)^3.$$
\end{thm}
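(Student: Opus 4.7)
The plan is to invoke the Euler product $\zeta_{\Z\B}(s) = \prod_p \zeta_{\Z_p\B}(s)$. Since $p=3$ is the only relevant prime, for each $p \ne 3$ the completion $\Z_p\B$ equals the maximal order $\Z_p\E$, whose zeta function is $\zeta_{\Z_p}(s)^3$ because $\Q\B$ splits as three copies of $\Q$. It therefore suffices to compute $\zeta_{\Z_3\B}(s)$ and then reassemble the global zeta function using the identity $\prod_p \zeta_{\Z_p}(s) = \zeta_{\Z}(s)$.

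The second step is to write $\zeta_{\Z_3\B}(s) = \sum_M Z_\Lambda(\Lambda, M; s)$, where $M$ ranges over a complete set of isomorphism class representatives of full $\Lambda$-lattices in $\Q_3\B$. Thanks to Lemma \ref{LatticesAbove} and Remark \ref{HermiteZeroes}, such a set has already been produced in the preceding discussion: the six overorders $\Lambda_{000}$, $\Lambda_{0\!+\!01}$, $\Lambda_{01\!+\!0}$, $\Lambda_{0'1\!+\!0}$, $\Lambda_{011}$, and $\Lambda$ itself, together with the non-order $\Lambda$-lattice $M = \langle e_0+2e_2,\, e_1+2e_2,\, 3e_2\rangle$. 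Each of the seven corresponding genus zeta functions $Z_\Lambda(\Lambda, M; s)$ has been evaluated explicitly above.

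What remains is to add these seven expressions, factor out the common $\zeta_{\Z_3}(s)^3$, and collect the coefficients of $3^{-ks}$ for $k = 0, 1, \dots, 6$, then multiply by $\prod_{p \ne 3} \zeta_{\Z_p}(s)^3$ to obtain the claimed global formula. Since each genus zeta function is already of the shape (polynomial in $3^{-s}$) times $\zeta_{\Z_3}(s)^3$, the common factor emerges immediately, and only the polynomial parts need to be combined.

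The main obstacle has really been cleared in the calculations leading up to the theorem: the delicate step was computing each local integral, particularly $Z_\Lambda(\Lambda, \Lambda; s)$, where the unit cosets of $1 + \langle 3b_0,\, 3e_1+6e_2,\, 9e_2\rangle$ had to be isolated and the remaining piece reduced to the integral over $\langle 3b_0,\, 3e_1+6e_2,\, 9e_2\rangle$ computed in Equation (\ref{rank3nonmax2}). With those formulas and the $\mu(\Aut M)^{-1}(\Lambda\colon M)^{-s}$ prefactors in hand, the summation step is entirely routine; the only genuine risk is arithmetic error in adding seven polynomials whose leading terms nearly cancel, so the verification worth carrying out carefully is that the coefficients of $3^{-5s}$ and $3^{-4s}$ collect to $-2\cdot 3^2$ and $4\cdot 3$ respectively.
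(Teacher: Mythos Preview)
Your proposal is correct and follows exactly the approach of the paper: both reduce to $p=3$ via the Euler product, rely on the seven genus zeta functions $Z_\Lambda(\Lambda,-;s)$ already computed in the text preceding the theorem, and obtain the result by summing those seven polynomials times $\zeta_{\Z_3}(s)^3$. The paper's own ``proof'' is the single sentence that the zeta function is the sum of the local zeta integrals obtained above, so your write-up is in fact more explicit about the final summation than the paper itself.
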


\bigskip

\begin{rmk}  The recursive method we have employed throughout can in theory be used to compute the zeta functions for commutative local orders of any rank.  Lemma \ref{LatticesAbove} and Remark \ref{HermiteZeroes} will need to be adapted to the non-split situation.  After that the most difficult step in the calculation is the dividing of the local zeta integral into parts, some of which have already been computed, and others that are sums of multiplicative translates of ones that are straightforward to compute.  In the above examples, the number of these translates depended upon a single congruence.  As the rank and complexity increases, this decomposition step will become more complicated, and better techniques for decomposing the integral and managing the pieces will be needed.
\end{rmk}  

\medskip
{\bf Acknowledgement:} The authors would like to thank the anonymous referee for several insightful comments that significantly improved the exposition of this paper.

\end{document}